\documentclass[12pt]{amsart}

\usepackage{amsfonts, amsthm, amsmath,dsfont}

\usepackage{rotating}

\usepackage{tikz}

\usepackage{graphics}

\usepackage{amssymb}

\usepackage{amscd}

\usepackage[latin2]{inputenc}

\usepackage{t1enc}

\usepackage[mathscr]{eucal}

\usepackage{indentfirst}

\usepackage{graphicx}

\usepackage{graphics}

\usepackage{pict2e}

\usepackage{mathrsfs}

\usepackage{enumerate}

\usepackage{tikz}
\usetikzlibrary{tikzmark,arrows.meta}

\usepackage[pagebackref]{hyperref}

\hypersetup{colorlinks=true}

\usepackage{cite}
\usepackage{color}
\usepackage{epic}
\usepackage{hyperref}
\usepackage{framed}
\usepackage{mathabx}
\usepackage{booktabs}
\usepackage{makecell}
\usepackage{comment}
\newcolumntype{V}{!{\vrule width 2pt}}

\numberwithin{equation}{section}
\topmargin 0.5in
\textheight=8.7in
\textwidth=6.4in
\voffset=-.68in
\hoffset=-.68in

\theoremstyle{plain}

\newtheorem{theorem}{Theorem}[section]
\newtheorem{corollary}[theorem]{Corollary}

\newtheorem{conjecture}[theorem]{Conjecture}
\newtheorem{remark}[theorem]{Remark}
\newtheorem{lemma}[theorem]{Lemma}

\def\A{\mathcal{A}}

\def\U{\mathcal{U}}

\def\N{\mathbb{N}}
\def\Q{\mathbb{Q}}

\def\inv{\mathsf{inv}}

\def\id{\mathrm{id}}

\begin{document}

\title[A Catalan-tangent number identity]{An  identity relating Catalan numbers to tangent numbers with arithmetic applications}

\author[T. Zhao]{Tongyuan Zhao}
\address[Tongyuan Zhao]{College of Science, China University of Petroleum, 102249 Beijing, P.R. China}
\email{zhaotongyuan@cup.edu.cn}

\author[Z. Lin]{Zhicong Lin}
\address[Zhicong Lin]{Research Center for Mathematics and Interdisciplinary Sciences, Shandong University \& Frontiers Science Center for Nonlinear Expectations, Ministry of Education, Qingdao 266237, P.R. China}
\email{linz@sdu.edu.cn}

\author[Y. Zang]{Yongchun Zang}
\address[Yongchun Zang]{Research Center for Mathematics and Interdisciplinary Sciences, Shandong University \& Frontiers Science Center for Nonlinear Expectations, Ministry of Education, Qingdao 266237, P.R. China}
\email{202521437@mail.sdu.edu.cn}

\date{\today}

\begin{abstract} We prove a combinatorial  identity relating Catalan numbers to tangent numbers arising from the study of peak algebra that was conjectured by Aliniaeifard and Li. This identity leads to the discovery of the intriguing identity
$$
\sum_{k=0}^{n-1}{2n\choose 2k+1}2^{2n-2k}(-1)^{k}E_{2k+1}=2^{2n+1},
$$
where $E_{2k+1}$ denote the tangent numbers. Interestingly, the latter identity can be applied to prove that $(n + 1)E_{2n+1}$ is divisible by $2^{2n}$ and the quotient is an odd number, a fact whose traditional proofs  require significant calculations. Moreover, we find a natural $q$-analog of the latter identity with a combinatorial proof.
This $q$-identity can be applied to prove Foata's divisibility property  of the $q$-tangent numbers, which responds  to a problem raised by Sch\"utzenberger.
\end{abstract}

\keywords{Catalan numbers, Tangent numbers, Genocchi numbers, Divisibility, Generating functions, Odd set compositions.
\newline \indent 2020 {\it Mathematics Subject Classification}. 05A05, 05A19, 11B65.}

\maketitle

\section{Introduction}
A {\em set composition} (or {\em ordered set partition})  $\phi$ of $[n]:=\{1,2,\ldots,n\}$ is a list of mutually disjoint nonempty subsets $\phi_1,\phi_2,\ldots,\phi_{\ell}$ of $[n]$ whose union is $[n]$.  It is convenient  to denote  $\phi$ by $\phi_1/\phi_2/\cdots/\phi_{\ell}\vDash[n]$. Each $\phi_i$ is called a {\em block} of $\phi$ and the number of blocks of $\phi$ is denoted by $\ell(\phi)$. If each block of $\phi$ has odd size, then $\phi$ is said to be {\em odd}.
 For instance, $\phi=6/148/9/235/7$ is an odd set composition of $[9]$ with $\ell(\phi)=5$. Odd set compositions were considered by Aliniaeifard and  Li~\cite{AL} in studying  the internal coproduct formula for peak algebra.

The {\em Catalan numbers} 
$$\left\{C_n=\frac{1}{n+1}{2n\choose n}\right\}_{n\geq0}=\{1,1,2,5,14,42,132,\ldots\}$$
and the {\em Euler numbers} $E_n$, defined by the Taylor expansion
$$
\tan(x)+\sec(x)=\sum_{n\geq1}E_n\frac{x^n}{n!}=1+x+1\frac{x^2}{2!}+2\frac{x^3}{3!}+5\frac{x^4}{4!}+16\frac{x^5}{5!}+61\frac{x^6}{6!}+\cdots,
$$
are two of the most important combinatorial sequences in enumerative combinatorics.
The numbers $E_n$ with even indices and odd indices are also known as {\em secant numbers} and
{\em tangent numbers}, respectively. A classical result of Andr\'e~\cite{An} asserts that $E_n$ enumerates permutations $\pi$ of $[n]$ that satisfy the {\em alternating} property:
$$
\pi_1>\pi_2<\pi_3>\pi_4<\cdots.
$$
Since the discovery of Andr\'e, the Euler numbers have been investigated extensively in combinatorics; see the survey of Stanley~\cite{St10}. In particular, several combinatorial identities arising from permanents for Euler numbers involving Stirling numbers of the second kind were found in~\cite{LYZ,FLS}.
 Recently, in their  study of non-commutative peak algebra, Aliniaeifard and  Li~\cite{AL} found the following combinatorial identity linking Catalan numbers with tangent numbers.

\begin{conjecture}[\text{Aliniaeifard and  Li~\cite[Conjecture~11.7]{AL}}]\label{conj:AL}
For even positive integer $\ell$, let $\mu_{\ell}=(-1)^{\frac{\ell}{2}-1}C_{\frac{\ell}{2}-1}$. For any odd positive integer $n$, we have
\begin{equation}\label{eq:ALconj}
\sum_{\phi\vDash[n]\atop\text{$\phi$ odd}}2^{n-\ell(\phi)}\mu_{\ell(\phi)+1}=(-1)^{\frac{n-1}{2}}E_n.
\end{equation}
\end{conjecture}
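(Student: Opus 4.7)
\textbf{The plan} is to verify~\eqref{eq:ALconj} by computing the exponential generating function (EGF) of the left-hand side in closed form and recognizing it as $\tanh(x)$. The right-hand side has the same EGF: from the definition of $E_n$ and the identity $\tan(ix)=i\tanh(x)$, one gets $\tanh(x)=\sum_{n\text{ odd}}(-1)^{(n-1)/2}E_n x^n/n!$, so the proof reduces to evaluating one EGF. A first reduction is immediate: because $n$ and every block size are odd, the number of blocks $\ell(\phi)$ is automatically odd; writing $\ell(\phi)=2k+1$ gives $\mu_{\ell(\phi)+1}=(-1)^kC_k$, so the left-hand side equals
$$a_n=\sum_{k\ge 0}(-1)^kC_k\,2^{n-2k-1}\,O_{n,2k+1},$$
where $O_{n,m}$ denotes the number of odd set compositions of $[n]$ with $m$ blocks.

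\textbf{The main computation.} The exponential formula gives $\sum_n O_{n,m}x^n/n!=(\sinh x)^m$, since each odd block contributes the EGF $\sinh x$ and blocks are ordered. Rescaling $x\mapsto 2x$ absorbs the factor $2^n$, leading to
$$F(x):=\sum_{n\text{ odd}}a_n\frac{x^n}{n!}=\sum_{k\ge 0}(-1)^kC_k\left(\frac{\sinh(2x)}{2}\right)^{2k+1}.$$
The substitution $u=\sinh(2x)/2$ together with the Catalan generating function $\sum_{k\ge 0}C_kv^k=(1-\sqrt{1-4v})/(2v)$, applied at $v=-u^2$, collapses the series to $F(x)=(\sqrt{1+4u^2}-1)/(2u)$. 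The Pythagorean identity $1+\sinh^2(2x)=\cosh^2(2x)$ removes the radical, and the half-angle formulas $\cosh(2x)-1=2\sinh^2 x$ and $\sinh(2x)=2\sinh x\cosh x$ reduce $F(x)$ to $\tanh(x)$, completing the proof.

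\textbf{Expected difficulty.} The computation is short, but the crucial observation is that the discriminant $1+4u^2$ inside the Catalan generating function happens to be a perfect square under $u=\sinh(2x)/2$; without this, the appearance of $\tanh(x)$ (and hence tangent numbers) would be mysterious. This coincidence is ultimately what explains why Catalan numbers combine with odd set compositions to produce tangent numbers, and it suggests that finding a direct bijective proof — rather than an analytic one — may be considerably harder, since the bijection would have to encode this hidden Pythagorean cancellation combinatorially.
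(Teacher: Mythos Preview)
Your proposal is correct and follows essentially the same approach as the paper's proof: both compute the EGF of the left-hand side by using $(\sinh x)^m$ as the EGF for odd set compositions with $m$ blocks, substitute $u=\sinh(2x)/2$ into the Catalan generating function, and simplify $(\cosh(2x)-1)/\sinh(2x)$ to $\tanh(x)$, which is then matched with the EGF of the right-hand side via $\tan(ix)=i\tanh(x)$.
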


For instance, since all the odd set compositions of $[3]$ are 
$$
123, 1/2/3, 1/3/2, 2/1/3, 2/3/1, 3/1/2, 3/2/1,
$$
identity~\eqref{eq:ALconj} reads 
$$
2^{3-1}(-1)^0C_0+6(-1)^1C_1=(-1)^1E_3.
$$
\begin{remark}
The signed Catalan numbers $\mu_{\ell}$ are the M\"obius functions of the posets of all odd set compositions with refinement order (see~\cite{AL,CP,SS}). Originally, \eqref{eq:ALconj} is stated as
\begin{equation}\label{eq:ALconj2}
\sum_{\phi\vDash[n]\atop\text{$\phi$ odd}}2^{n-\ell(\phi)}\mu_{\ell(\phi)+1}=\sum_{0\leq k\leq n-1}(-1)^k A(n,k),
\end{equation}
where $A(n,k)$ is the Eulerian number that enumerates permutations of $[n]$ with $k$ descents. As  was known (see~\cite{LZ}) that for odd $n$
$$
\sum_{0\leq k\leq n-1}(-1)^k A(n,k)=(-1)^{\frac{n-1}{2}}E_n,
$$
\eqref{eq:ALconj2} and~\eqref{eq:ALconj} are equivalent.
\end{remark}

The main objective of this paper is to prove Conjecture~\ref{conj:AL} using generating functions and to present some of its interesting consequences and  applications.

An  immediate consequence of~\eqref{eq:ALconj} is the discovery of the following intriguing  combinatorial identity for tangent numbers.
\begin{corollary}\label{cor:genocc}
For $n\geq1$,  we have
\begin{equation}\label{eq:Genocchi}
\sum_{k=0}^{n-1}{2n\choose 2k+1}2^{2n-2k}(-1)^{k}E_{2k+1}=2^{2n+1}.
\end{equation}
\end{corollary}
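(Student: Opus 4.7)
The plan is to derive \eqref{eq:Genocchi} from a simple hyperbolic identity combined with the classical exponential generating function for the tangent numbers. The starting point is
\[
\tanh(x) = \sum_{k\geq 0}(-1)^k E_{2k+1}\frac{x^{2k+1}}{(2k+1)!},
\]
obtained by substituting $ix$ for $x$ in $\tan(x) = \sum_{k\geq 0}E_{2k+1}\,x^{2k+1}/(2k+1)!$ and dividing by $i$.

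The key observation is the hyperbolic identity
\[
\tanh(x)\,\sinh(2x) = \cosh(2x) - 1,
\]
which is immediate from the double-angle formulas $\sinh(2x) = 2\sinh(x)\cosh(x)$ and $\cosh(2x) - 1 = 2\sinh^2(x)$, since then $\tanh(x)\sinh(2x) = \bigl(\sinh(x)/\cosh(x)\bigr)\cdot 2\sinh(x)\cosh(x) = 2\sinh^2(x)$. Substituting the tangent-number series above together with $\sinh(2x) = \sum_{j\geq 0}2^{2j+1}x^{2j+1}/(2j+1)!$ into the left side, and $\cosh(2x)-1 = \sum_{n\geq 1}2^{2n}x^{2n}/(2n)!$ on the right, I extract the coefficient of $x^{2n}/(2n)!$. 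The convolution on the left, reindexed via $k+j = n-1$, produces
\[
\sum_{k=0}^{n-1}\binom{2n}{2k+1}2^{2n-2k-1}(-1)^k E_{2k+1},
\]
so equating this with $2^{2n}$ and multiplying by $2$ yields \eqref{eq:Genocchi}.

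There is essentially no obstacle here: the only ``insight'' is spotting the hyperbolic identity $\tanh(x)\sinh(2x) = \cosh(2x)-1$, after which everything reduces to routine coefficient extraction. Strictly speaking, this argument does not explicitly use Conjecture~\ref{conj:AL}; however, the paper presumably frames \eqref{eq:Genocchi} as an ``immediate consequence'' because the generating-function proof of \eqref{eq:ALconj} will identify the EGF $\sum_{n\text{ odd}}(-1)^{(n-1)/2}E_n\,x^n/n!$ with the rational expression $(\cosh(2x)-1)/\sinh(2x)$ via the Catalan generating function $\sum_{m\geq 0}C_m z^m = (1-\sqrt{1-4z})/(2z)$, so the very same hyperbolic identity drops out of that computation without further work.
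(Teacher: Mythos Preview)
Your argument is correct. The coefficient extraction from $\tanh(x)\sinh(2x)=\cosh(2x)-1$ goes through exactly as you describe, and the arithmetic in the convolution is right.

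The paper's own proof of this corollary is not an argument at all: it simply cites \cite[Theorem~11.5]{AL} together with Conjecture~\ref{conj:AL} and says the identity follows. So your route is genuinely more self-contained. That said, your closing paragraph is exactly on target: in Section~\ref{sec:3}, the paper's proof of Conjecture~\ref{conj:AL} evaluates the Catalan-weighted sum via the substitution $z=-\sinh^2(2x)/4$ in $\sum C_k z^k=(1-\sqrt{1-4z})/(2z)$ and lands precisely on $(\cosh(2x)-1)/\sinh(2x)=\tanh(x)$, which is your hyperbolic identity in disguise. In effect you have isolated the one line of that computation actually needed for~\eqref{eq:Genocchi} and discarded the odd-set-composition scaffolding. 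The trade-off is that your approach no longer exhibits~\eqref{eq:Genocchi} as a consequence of~\eqref{eq:ALconj}, which is how the paper wants to motivate it; but as a standalone proof of the identity it is cleaner and shorter than going through the external reference.
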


\begin{proof}
As  observed by Aliniaeifard and  Li~\cite{AL}, the identity~\eqref{eq:Genocchi}
follows by combining~\cite[Theorem~11.5]{AL} and Conjecture~\ref{conj:AL}.
\end{proof}

The identity~\eqref{eq:Genocchi} is neat and deserves a pure combinatorial proof. We find such a proof which leads to a $q$-analog of~\eqref{eq:Genocchi} (see Theorem~\ref{thm:qgeno}) using a natural $q$-analog of the tangent numbers studied in~\cite{AG,Fo, FH10,LZ,St0,St10}.
As it turns out, our $q$-analog of~\eqref{eq:Genocchi} can be applied to prove Foata's divisibility property   of the $q$-tangent numbers~\cite{Fo}, which responds  to a problem raised by Sch\"utzenberger.

The tangent number $E_{2n+1}$ enumerates  complete increasing binary trees with $2n + 1$ nodes (see~\cite{St10}). This combinatorial interpretation immediately proves that $E_{2n+1}$ is divisible by $2^n$.   However, a stronger divisibility property is known in relation to Bernoulli and Genocchi numbers, namely, the divisibility of $(n + 1)E_{2n+1}$ by $2^{2n}$. The traditional proofs of this fact require significant calculations (see~\cite{Ba,Ca1,Ca2,FH,JS}) and a combinatorial proof was provided by Han and Liu~\cite{HL}  using increasing labelled binary trees. Surprisingly,  identity~\eqref{eq:Genocchi} leads to a simple proof of this divisibility.

\begin{corollary}\label{cor:divisi}
 The integer  $(n + 1)E_{2n+1}$ is divisible by $2^{2n}$, and the quotient is an odd number.
\end{corollary}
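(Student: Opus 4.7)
The plan is to induct on $n$, using identity \eqref{eq:Genocchi} with $n$ replaced by $n+1$ so that its $k=n$ term isolates $E_{2n+1}$. The base case $n=0$ is immediate from $E_1=1$. For the inductive step, I would assume $(k+1)E_{2k+1}=2^{2k}O_k$ with $O_k$ an odd positive integer for every $0\le k<n$, and derive the same formula at level $n$.

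First, separating the $k=n$ summand in the shifted version of \eqref{eq:Genocchi} yields
$$
8(n+1)(-1)^n E_{2n+1}=2^{2n+3}-\sum_{k=0}^{n-1}\binom{2n+2}{2k+1}2^{2n+2-2k}(-1)^k E_{2k+1}.
$$
Substituting $E_{2k+1}=2^{2k}O_k/(k+1)$ and applying the elementary identity
$$
\binom{2n+2}{2k+1}=\frac{2(k+1)}{2n+1-2k}\binom{2n+2}{2k+2}
$$
causes the $k+1$ in the denominator to cancel and the powers of $2$ to collect, so the $k$-th summand becomes $2^{2n+3}M_k$, where
$$
M_k:=(-1)^k\frac{\binom{2n+2}{2k+2}O_k}{2n+1-2k}.
$$
Since the summand itself is an integer and $2n+1-2k$ is odd, each $M_k$ is automatically a genuine integer. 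Dividing through by $8$, the recursion becomes
$$
(n+1)(-1)^n E_{2n+1}=2^{2n}\Bigl(1-\sum_{k=0}^{n-1}M_k\Bigr),
$$
already giving the desired power of $2$; what remains is to verify that $1-\sum_{k=0}^{n-1}M_k$ is odd.

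The parity check is the only subtle step, but it is short. Since $O_k$ and $2n+1-2k$ are odd, $M_k\equiv\binom{2n+2}{2k+2}\pmod 2$, and Lucas's theorem gives $\binom{2n+2}{2k+2}\equiv\binom{n+1}{k+1}\pmod 2$. Therefore
$$
\sum_{k=0}^{n-1}M_k\equiv\sum_{j=1}^{n}\binom{n+1}{j}=2^{n+1}-2\equiv 0\pmod 2,
$$
so $O_n:=(-1)^n\bigl(1-\sum_{k=0}^{n-1}M_k\bigr)$ is an odd positive integer and $(n+1)E_{2n+1}=2^{2n}O_n$. The main obstacle I anticipate is really just the bookkeeping in the substitution; once $\binom{2n+2}{2k+1}/(k+1)$ is rewritten as $2\binom{2n+2}{2k+2}/(2n+1-2k)$, the $2$-adic valuations align cleanly and the proof reduces to the innocuous Lucas congruence above.
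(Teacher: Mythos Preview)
Your proof is correct and follows essentially the same inductive route as the paper: both isolate the top term of \eqref{eq:Genocchi}, rewrite the remaining binomial via an identity that cancels the factor $k+1$ against an odd denominator (you use $\binom{2n+2}{2k+1}=\frac{2(k+1)}{2n+1-2k}\binom{2n+2}{2k+2}$, the paper uses $\binom{2n}{2k+1}=\frac{2k+2}{2n+1}\binom{2n+1}{2k+2}$), and then reduce the oddness of the quotient to a parity count of binomial coefficients. Your invocation of Lucas's theorem to get $\binom{2n+2}{2k+2}\equiv\binom{n+1}{k+1}\pmod 2$ is a cosmetic variation on the paper's direct summation of $\sum_k\binom{2n+1}{2k+2}$, but the argument is otherwise the same.
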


The quotients
$$
\frac{(n + 1)E_{2n+1}}{2^{2n}}=:G_{2n+2}
$$
are known as the {\em Genocchi numbers}, whose generating function is
$$
\sum_{n\geq0} G_{2n+2}\frac{x^{2n+2}}{(2n+2)!}=x\tan(x/2)=\frac{x^2}{2!}+\frac{x^4}{4!}+3\frac{x^6}{6!}+17\frac{x^8}{8!}+155\frac{x^{10}}{10!}+\cdots.
$$
The fact that Genocchi numbers $G_{2n+2}$ are odd integers is traditionally proven  using the von Staudt--Clausen theorem on Bernoulli numbers and the little Fermat theorem~\cite{Ca1,Ca2,JS}.
A different proof by using the Laplace transform was given by Barsky~\cite{Ba} (see also~\cite[Chapitre~3]{FH}). A combinatorial proof using the hook length formula for binary trees was provided by Han and Liu~\cite{HL}.  Our proof of Corollary~\ref{cor:divisi} is elementary, using only  identity~\eqref{eq:Genocchi} via simple induction.

The rest of this paper is organized as follows. In Section~\ref{sec:1}, we present a $q$-analog of~\eqref{eq:Genocchi} with a combinatorial proof.
This $q$-analog of~\eqref{eq:Genocchi} is then applied in Section~\ref{sec:foata} to provide an alternative proof of  Foata's divisibility property   of the $q$-tangent numbers.
The proofs of Corollary~\ref{cor:divisi} and Conjecture~\ref{conj:AL} are given in Sections~\ref{sec:2} and~\ref{sec:3}, respectively. Finally, this paper concludes with some  remarks in Section~\ref{sec:4}.

\section{Combinatorial proof of a $q$-analog of~\eqref{eq:Genocchi} }
\label{sec:1}
In this section, we find a combinatorial proof of~\eqref{eq:Genocchi}, leading to its $q$-analog.

We begin with some $q$-notations. The {\em$q$-shifted factorial} $(t;q)_n$ is defined by
$(t;q)_n :=\prod_{i=0}^{n-1}(1-tq^i)$ for any positive integer $n$ and $(t;q)_0=1$. In particular, $$(-q;q)_n=(1+q)(1+q^2)\cdots(1+q^n)
$$ is a natural $q$-analog of $2^n$. Recall that the $q$-binomial numbers $\left[{n\atop k}\right]_q$ are defined by
$$
{n\brack k}_q :=\frac{(q;q)_n }{(q;q)_{n-k}(q;q)_k}\qquad \textrm{for}\quad 0\leq k\leq n,
$$
and ${n\brack k}_q=0$ if $k<0$ or $k>n$.
 The {\em$q$-sine} $\sin_q(x)$ and {\em$q$-cosine} $\cos_q(x)$ are defined by
$$
\sin_q(x):=\sum_{n\geq0}(-1)^n\frac{x^{2n+1}}{(q;q)_{2n+1}} \quad\text{and}\quad\cos_q(x):=\sum_{n\geq0}(-1)^n\frac{x^{2n}}{(q;q)_{2n}}.
$$
The concerned {\em$q$-tangent numbers} $E_{2n+1}(q)$  then occur  in the  expansion of $\tan_q(x)$:
$$
\tan_q(x):=\frac{\sin_q(x)}{\cos_q(x)}=\sum_{n\geq0}E_{2n+1}(q)\frac{x^{2n+1}}{(q;q)_{2n+1}}.
$$
Let $\A_n$ be the set of all alternating (or down-up) permutations of $[n]$. It is a classical result (see~\cite{FH10,St10}) that $E_{2n+1}(q)$ has the combinatorial  interpretation
\begin{equation}\label{int:qtan}
E_{2n+1}(q)=\sum_{\pi\in\A_{2n+1}}q^{\inv(\pi)},
\end{equation}
where $\inv(\pi):=|\{(i,j)\in[n]^2: i<j,\pi_i>\pi_j\}|$ is the {\em inversion number} of a permutation $\pi$.
The main result in this section is the following $q$-analog of~\eqref{eq:Genocchi}.

\begin{theorem}
\label{thm:qgeno}
For $n\geq1$,  we have
\begin{equation}\label{eq:qGeno}
\sum_{k=0}^{n-1}{2n\brack 2k+1}_q(-q;q)_{2n-2k-2}(-1)^{k}E_{2k+1}(q)=(-q;q)_{2n-1}.
\end{equation}
\end{theorem}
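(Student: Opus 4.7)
The plan is to prove Theorem~\ref{thm:qgeno} by a combinatorial argument that specializes, at $q=1$, to a combinatorial proof of~\eqref{eq:Genocchi}.

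First, I set up the combinatorial models. Using~\eqref{int:qtan}, the three factors appearing on the LHS of~\eqref{eq:qGeno} admit the following interpretations: $E_{2k+1}(q) = \sum_{\sigma \in \A_{2k+1}} q^{\inv(\sigma)}$; the $q$-binomial coefficient is realized as ${2n \brack 2k+1}_q = \sum_S q^{\sum_{s \in S} s - \binom{2k+2}{2}}$ summing over $(2k+1)$-subsets $S \subset [2n]$; and $(-q;q)_{2n-2k-2} = \sum_{T \subset [2n-2k-2]} q^{\sum T}$. Hence each summand on the LHS becomes the signed $q$-weighted enumeration of triples $(S, \sigma, T)$ with sign $(-1)^k$, while the RHS is the unsigned $q$-weighted enumeration $\sum_{V \subset [2n-1]} q^{\sum V}$.

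Second, I extract the core identity. From $\tan_q(x)\cos_q(x) = \sin_q(x)$, equating coefficients of $x^{2m+1}/(q;q)_{2m+1}$ yields the base identity
\[
\sum_{k=0}^m (-1)^k {2m+1 \brack 2k+1}_q E_{2k+1}(q) = 1.
\]
At $q=1$, one reinterprets $T$ as an even-size subset of $[2n] \setminus S$ (of which there are $2^{2n-2k-2}$, since $|[2n]\setminus S|=2n-2k-1$ is odd) and groups the triples $(S, \sigma, T)$ by the odd-cardinality union $U := S \sqcup T \subset [2n]$; the inner signed sum over $(S, \sigma)$ for fixed $U$ collapses by the base identity (with $2m+1=|U|$) to $1$, and summing over the $2^{2n-1}$ odd-size subsets $U$ yields~\eqref{eq:Genocchi}.

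Third, I $q$-ify this argument. The base identity applies verbatim, but the grouping requires a $q$-weight-preserving re-indexing of the abstract subset $T \subset [2n-2k-2]$ as an even-size subset $T' \subset [2n]\setminus S$, followed by a bijection between odd-size subsets of $[2n]$ and arbitrary subsets of $[2n-1]$ that preserves $q^{\sum(\cdot)}$. The main obstacle will be precisely this $q$-weight book-keeping: at $q=1$ the cardinalities collapse trivially, but at general $q$ the naive bijections (e.g.\ toggling membership of $2n$) do not preserve $q^{\sum(\cdot)}$. Overcoming this will likely require introducing an auxiliary statistic on $(S, \sigma, T)$ — an alternating-permutation analog of the major index, or a shifted "sum" statistic on $T$ — chosen so that after collapsing the inner sum via the base identity, what remains is exactly $q^{\sum V}$ for the corresponding $V \subset [2n-1]$, yielding the desired product $(-q;q)_{2n-1}$.
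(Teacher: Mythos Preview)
Your proposal has a genuine gap: step three is not a proof but a wish. You correctly note that the element-sum statistic on subsets does not survive the regrouping by $U=S\sqcup T$, and you end by saying the argument ``will likely require'' an auxiliary statistic you never produce. Concretely, the $q$-binomial ${2n\brack 2k+1}_q$ records the position of $S$ inside $[2n]$, not inside $U$; once you fix $U$ and try to run the base identity $\sum_k(-1)^k{2m+1\brack 2k+1}_qE_{2k+1}(q)=1$ on the fibre, the weight attached to the choice of $S$ inside $U$ is not ${|U|\brack |S|}_q$ with the right exponent, and the leftover weight on $U$ is not $q^{\sum V}$ for any natural $V\subset[2n-1]$. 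Toggling $2n$ to pass from odd subsets of $[2n]$ to subsets of $[2n-1]$ changes $q^{\sum(\cdot)}$ by $q^{2n}$, so that bijection fails too. Nothing here is wrong, but the hard part is simply not done.

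The paper avoids this bookkeeping entirely by changing the combinatorial model. Instead of triples $(S,\sigma,T)$ weighted by element sums, it works with permutations of $[2n]$ weighted by $\inv$. The key lemma is that unimodal permutations satisfy $\sum_{\pi\in\U_n}q^{\inv(\pi)}=(-q;q)_{n-1}$; this supplies both the factor $(-q;q)_{2n-2k-2}$ (from a unimodal prefix of length $2n-2k-1$) and the right-hand side $(-q;q)_{2n-1}$ (since $\U_{2n}$ itself is the target). Letting $S_k$ be the permutations whose first $2n-2k-1$ letters are unimodal and whose last $2k+1$ letters are alternating, one has a literal set-theoretic telescoping $\U_{2n}=S_0-S_1+S_2-\cdots$, so the $\inv$ weight is preserved automatically and no auxiliary statistic is needed.
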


We need some preparations before we can prove Theorem~\ref{thm:qgeno}.
A word $w=w_1w_2\cdots w_k$ on $\N$ with distinct letters is called {\em unimodal} if
$$
w_1<w_2<\cdots<w_{\ell}>w_{\ell+1}>\cdots>w_k
$$
for some $1\leq\ell\leq k$. Let $\U_n$ be the set of all unimodal permutations of $[n]$. For instance,
$\U_3=\{123,132,231,321\}$. It is clear that $|\U_n|=2^{n-1}$. We require  the following lemma, which  enumerates unimodal permutations by  inversion numbers.

\begin{lemma}\label{lem:unimodal}
For $n\geq1$, we have
\begin{equation}
\sum_{\pi\in\U_n}q^{\inv(\pi)}=(-q;q)_{n-1}.
\end{equation}
\end{lemma}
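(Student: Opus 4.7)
The plan is to establish Lemma~\ref{lem:unimodal} by a straightforward induction on $n$, using the position of the element $1$ to peel off a recursion that mirrors the factorization $(-q;q)_{n-1}=(1+q^{n-1})(-q;q)_{n-2}$.

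\smallskip

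The key observation is that in any unimodal permutation $\pi=\pi_1\pi_2\cdots\pi_n\in\U_n$, the letter $1$ must occupy either position $1$ or position $n$. Indeed, if $\pi_i=1$ with $1<i<n$, then both $\pi_{i-1}>\pi_i=1$ and $\pi_i=1<\pi_{i+1}$, contradicting the unimodal pattern. This splits $\U_n$ into two classes, according to whether $\pi_1=1$ or $\pi_n=1$. In either case, deleting the $1$ and relabeling by $j\mapsto j-1$ produces a unimodal permutation in $\U_{n-1}$, and this map is clearly bijective on each class.

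\smallskip

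Next I would track the inversion statistic through this deletion. If $\pi_1=1$, the leading $1$ participates in no inversions, so $\inv(\pi)$ equals the inversion number of the relabeled permutation in $\U_{n-1}$. If $\pi_n=1$, every one of the $n-1$ preceding letters exceeds $1$, contributing exactly $n-1$ inversions that vanish upon deletion, while the remaining inversions are unaffected by relabeling. Writing $u_n(q):=\sum_{\pi\in\U_n}q^{\inv(\pi)}$, this gives the recursion
\begin{equation*}
u_n(q)=u_{n-1}(q)+q^{n-1}u_{n-1}(q)=(1+q^{n-1})\,u_{n-1}(q).
\end{equation*}
With the base case $u_1(q)=1=(-q;q)_0$, a trivial induction yields $u_n(q)=\prod_{i=1}^{n-1}(1+q^i)=(-q;q)_{n-1}$, as claimed.

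\smallskip

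There is no real obstacle here: the only point that needs any care is confirming that the unimodal shape is preserved under removal of $1$ from either end (immediate from the definition), and that the inversion contribution of $1$ at the rightmost position is exactly $n-1$. Both are one-line verifications, so the argument is essentially just the recursion above together with its induction.
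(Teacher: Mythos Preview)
Your proof is correct and complete. It differs from the paper's in a pleasant way: the paper decomposes $\U_n$ according to the position of the \emph{maximum} $n$, writing each unimodal permutation as an increasing prefix and a decreasing suffix around $n$; this yields
\[
\sum_{\pi\in\U_n}q^{\inv(\pi)}=\sum_{k=0}^{n-1}q^{\binom{k+1}{2}}{n-1\brack k}_q,
\]
and then invokes the $q$-binomial theorem to collapse the sum into the product $(-q;q)_{n-1}$. You instead track the position of the \emph{minimum} $1$, which can only be first or last; this immediately gives the recursion $u_n(q)=(1+q^{n-1})u_{n-1}(q)$ that matches the product factor-by-factor. Your route is more elementary and self-contained, bypassing both the combinatorial interpretation of the $q$-binomial coefficient and the $q$-binomial theorem. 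The paper's route, on the other hand, makes the link to $q$-binomials explicit, which is the form one would reach for if one needed to refine the count by the length of the decreasing tail.
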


\begin{proof}
It is known~\cite[Proposition~1.3.17]{st0} that  the $q$-binomial coefficient
has the combinatorial interpretation
\begin{equation}\label{eq:qmul}
{n\brack  k}_{q}=\sum_{({\mathcal A}, {\mathcal B})}q^{\inv({\mathcal A}, {\mathcal B})},
\end{equation}
where the sum is over all set compositions $({\mathcal A}, {\mathcal B})$ of $[n]$ such that $|{\mathcal A}|=k$ and
$$\inv({\mathcal A}, {\mathcal B}):=|\{(a,b)\in{\mathcal A}\times{\mathcal B}: a>b\}|.$$ Each permutation  $\pi\in\U_n$ with $\pi_{n-k}=n$ (for some $0\leq k\leq n-1$) can be decomposed as
$$
\pi=\pi_1\pi_2\cdots\pi_{n-k-1},n,\pi_{n-k+1}\pi_{n-k+2}\cdots\pi_n,
$$
where $\pi_1<\pi_2<\cdots<\pi_{n-k-1}$ and $\pi_{n-k+1}>\pi_{n-k+2}>\cdots>\pi_n$. Since
$$
\inv(\pi)=\inv(\{\pi_1,\pi_2,\ldots,\pi_{n-k-1}\},\{\pi_{n-k+1},\pi_{n-k+2},\ldots,\pi_n\})+{k+1\choose 2},
$$
it follows from~\eqref{eq:qmul} that
\begin{equation}
\sum_{\pi\in\U_n}q^{\inv(\pi)}=\sum_{k=0}^{n-1}q^{{k+1\choose 2}}{n-1\brack k}_q=(-q;q)_{n-1},
\end{equation}
where the last equality is obtained by applying the {\em$q$-binomial theorem}
$$
\prod_{j=1}^n(1+tq^j)=\sum_{k=0}^nq^{{k+1\choose 2}}{n\brack k}_qt^k.
$$
This completes the proof of the lemma.
\end{proof}

We are ready for the proof of Theorem~\ref{thm:qgeno}.

\begin{proof}[{\bf Combinatorial proof of Theorem~\ref{thm:qgeno}}]
Consider the set $S_k$ of all permutations $\pi=\pi_1\pi_2\ldots\pi_{2n}$ of $[2n]$ such that
$$
\pi_1\pi_2\cdots\pi_{2n-2k-1}\text{ is unimodal and  } \pi_{2n-2k}\pi_{2n-2k+1}\cdots\pi_{2n} \text{ is alternating}.
$$
Let $T_k$ be the subset of $S_k$ consisting of these permutations satisfying
$$\pi_{2n-2k-1}<\pi_{2n-2k}\text{ and the prefix } \pi_1\pi_2\cdots\pi_{2n-2k-1}\text{ is not increasing}.$$
Then, we have  $T_{i-1}=S_i-T_i$ for $1\leq i\leq n-1$. Thus,
\begin{equation}\label{eq:incluexclu}
\U_{2n}=S_0-T_0=S_0-(S_1-T_1)=\cdots=S_0-S_1+S_2-S_3+\cdots,
\end{equation}
where ``$-$'' is set subtraction and ``$+$'' is set addition. By Lemma~\ref{lem:unimodal}, the interpretation~\eqref{eq:qmul} of the $q$-binomial coefficients and the interpretation~\eqref{int:qtan} of the $q$-tangent numbers,   we have
$$
\sum_{\pi\in S_k}q^{\inv(q)}={2n\brack 2k+1}_q(-q;q)_{2n-2k-2}E_{2k+1}(q).
$$
It then follows from~\eqref{eq:incluexclu} that
$$
\sum_{\pi\in\U_{2n}}q^{\inv(\pi)}=\sum_{k=0}^{n-1}{2n\brack 2k+1}_q(-q;q)_{2n-2k-2}(-1)^{k}E_{2k+1}(q),
$$
which completes the proof of the theorem in view of Lemma~\ref{lem:unimodal}.
\end{proof}

\section{On Foata's divisibility property   of the $q$-tangent numbers}
\label{sec:foata}
In this section, we provide an alternative proof of  Foata's divisibility property  of the $q$-tangent numbers  using the identity~\eqref{eq:qGeno}.

Since by Corollary~\ref{cor:divisi} we have
$$
(n + 1)E_{2n+1}=2^{2n}G_{2n+2}
$$
with $G_{2n+2}$ an odd integer, 
Sch\"utzenberger (see~\cite{Fo}) raised the problem of finding a polynomial of the form 
$$
\prod_{i\geq1}(1+q^i)^{a(n,i)}
$$
that divides $E_{2n+1}(q)$. In particular, he made a conjecture, which asserts that $E_{2n+1}(q)$  is divisible by $(-q;q)_n$, at the combinatorics conference at Oberwohlfach in 1975.  His conjecture was confirmed by Andrews and Gessel~\cite{AG} who showed that $E_{2n+1}(q)$  is divisible by 
$(1+q)^{\lfloor n/2\rfloor}(-q;q)_n$. Foata~\cite{Fo} further strengthened the latter result as follows.

Every integer $n$ may be written uniquely as $n = m2^l$ with $l \geq 0$ and $m$ odd. 
Following Foata~\cite{Fo}, we  can define the polynomial
\[
Ev_n(q) = \prod_{j=0}^{l} \left(1 + q^{m2^j}\right).
\]
For $n\geq1$,  let
\[
D_n(q):=
\begin{cases}
\prod_{i=1}^{n} Ev_i(q), & n \text{ odd}, \\
(1 + q^2) \prod_{i=1}^{n} Ev_i(q), & n \text{ even}.
\end{cases}
\]
For the sake of convenience, let $Ev_0(q) = D_0(q) = 1$. With the recursion
\begin{equation}\label{rec:Foa}
E_{2n + 1}(q) = \sum_{k=0}^{n - 1} {2n\brack 2k+1}_q q^{2k + 1} E_{2k + 1}(q) E_{2n - 2k - 1}(q) \quad (n \geq 1)
\end{equation}
and analysis of cyclotomic polynomials, Foata~\cite[Theorem~1]{Fo} proved the following divisibility property.

\begin{theorem}[Foata] \label{thm:Fo}
For each $n\geq 1$,    $E_{2n-1}(q)$ is divisible by $D_{n-1}(q)$.
\end{theorem}

For the reader's convenience, the first few factorizations (in the ring $\Q[q]$) of $E_{2n-1}(q)$ are listed below:
\begin{align*}
  E_1(q) &= 1 ,\\
  E_3(q) & = (1 + q)q ,\\
  E_5(q) & = (1 + q)^2 (1 + q^2)^2 q^2 ,\\
  E_7(q) &= (1 + q)^2  (1 + q^2) (1 + q^3)q^3  (1 + q + 3q^2 + 2q^3 + 3q^4 + 2q^5 + 3q^6 + q^7 + q^8),\\
  E_9(q) &= (1 + q)^3  (1 + q^2)^3 (1 + q^3)(1 + q^4)\times\\& \,\quad q^4  (1 + q + 3q^2 + 2q^3 + 3q^4 + 3q^5 + 5q^6 + 3q^7 + 3q^8 + 2q^9 + 3q^{10} + q^{11} + q^{12}).
 \end{align*}
The purpose of this section is to provide an alternative approach to  Theorem~\ref{thm:Fo} via  the identity~\eqref{eq:qGeno}. 

\begin{remark}
Setting $q=1$ in~\eqref{rec:Foa} gives 
$$
E_{2n + 1} = \sum_{k=0}^{n - 1} {2n\choose 2k+1} E_{2k + 1} E_{2n - 2k - 1} \quad (n \geq 1). 
$$
Unlike our identity~\eqref{eq:Genocchi}, we have no idea how the above recursion  can be applied to prove Corollary~\ref{cor:divisi}.
\end{remark}

We need two auxiliary lemmas before we  prove  Theorem~\ref{thm:Fo}. 
For two polynomials $f (q), g(q) \in \mathbb{Z}[q]$, it is convenient to write  $f (q) \mid g(q)$ if
 $g(q)/f(q) \in \mathbb{Z}[q]$. 
Following Lin et al.~\cite{LM}, we introduce 
$$
B_n(q):=\prod_{k\geq1}\prod_{i=1}^{\lfloor n/2^k \rfloor}(1+q^i).
$$
It was  observed in~\cite[Remark~4.4]{LM} that 
\begin{equation}\label{eq:LinFo}
B_{2n}(q)=B_{2n+1}(q)=\prod_{i=1}^{n} Ev_i(q).
\end{equation}
The following characterization of the factor $B_n(q)$ in polynomials proved in~\cite[Theorem~4.1]{LM} is useful in our approach.

\begin{lemma} \label{thm:LWMW}
For a fixed integer $n\geq1$ and a polynomial $f(q) \in \mathbb{Z}[q]$,
$$
B_n(q) \mid f(q) \Longleftrightarrow  (1 + q^m)^{\lfloor n/ 2m \rfloor}\mid f(q)
\enspace \text{for all} \enspace 1 \leq m \leq \lfloor n/ 2 \rfloor.
$$
\end{lemma}

Using Lemma~\ref{thm:LWMW}, we can prove the following  result.

\begin{lemma} \label{lem:GQ}

For $n\geq1$, we have $B_{2n}(q)\mid(-q;q)_{2n-1}$.
\end{lemma}

\begin{proof}
According to Lemma~\ref{thm:LWMW}, we only need to show that for $1 \leq m \leq \lfloor n/ 2 \rfloor$,
\begin{equation}\label{eq:GQ}
(1 + q^m)^{\lfloor 2n/ 2m \rfloor} \mid (-q;q)_{2n-1} = \prod_{i=1}^{2n-1}(1+q^i).
\end{equation}
Since $(1 + q^m)\mid(1 + q^n)$ if and only if $n/m$ is an odd integer, ~\eqref{eq:GQ} is derived from
$$
 \lfloor n/m \rfloor \leq \left\lfloor \biggl(\frac{2n-1}{m}+1\biggr)/2\right\rfloor,
$$
which completes the proof.
\end{proof}

Now we are ready for the proof of Theorem~\ref{thm:Fo}.

\begin{proof}
Recall that 
$$
B_{2n}(q)=\prod_{i=1}^{n} Ev_i(q).
$$
We first prove that $B_{2n-2}(q)\mid E_{2n-1}(q)$ using the identity~\eqref{eq:qGeno}. We proceed by induction on $n$.
By the induction hypothesis, we have
\begin{equation}\label{eq:Ev|E}
Ev_0(q) \cdots Ev_{k-1}(q) \mid E_{2k-1}(q) \quad (1 \leq k\leq  n - 1).
\end{equation}
 According to ~\cite[Lemma~2.2]{Fo}, there holds
$$
Ev_{n-k+1}(q) \cdots Ev_{n}(q) ~\bigg|~ {2n\brack 2k-1}_q Ev_{1}(q) \cdots Ev_{k-1}(q).
$$
Replacing $k$ by $n-k+1$, we have
\begin{equation}\label{eq:Fo2}
Ev_{k}(q) \cdots Ev_{n}(q) ~\bigg|~ {2n\brack 2k-1}_q Ev_{1}(q) \cdots Ev_{n-k}(q).
\end{equation}
Combining~\eqref{eq:LinFo},~\eqref{eq:Ev|E} and~\eqref{eq:Fo2} gives
$B_{2n}(q) \mid {2n\brack 2k-1}_q B_{2n-2k}(q)E_{2k-1}(q)$.
Applying Lemma~\ref{lem:GQ}, we have $B_{2n-2k}(q) \mid (-q;q)_{2n-2k}$.
Thus,
$$
B_{2n}(q) ~\bigg|~ {2n\brack 2k-1}_q (-q;q)_{2n-2k} E_{2k-1}(q).
$$
Now we have shown that each term of~\eqref{eq:qGeno} is divisible by $B_{2n}(q)$
except
$$
{2n\brack 2n-1}_q (-q;q)_{0}(-1)^{n-1} E_{2n-1}(q),
$$
so $B_{2n}(q)=B_{2n-2}(q)Ev_{n}(q) \mid (1+q+ \cdots q^{2n-1}) E_{2n-1}(q)$.
Assume that $n=m 2^l$ where $l\geq0$ and $m$ odd. Notice that
$$
\frac{1+q+ \cdots q^{2n-1}}{Ev_n(q)}=\frac{1-q^{m2^{l+1}}}{1-q^m}\cdot\frac{1-q^m}{1-q}\cdot\frac{1}{\prod_{j=0}^{l} (1 + q^{m2^j})}=\frac{1-q^m}{1-q},
$$
and  $B_{2n-2}(q), \frac{1-q^m}{1-q}$ share no common roots (since $1+q^i, 1-q^m$ share no common roots for $i\geq1$),
we have $B_{2n-2}(q)\mid E_{2n-1}(q)$. This proves that $E_{2n-1}(q)$ is divisible by $D_{n-1}(q)$ when $n$ is even.

For $n$ odd, we need to prove that $D_{n-1}(q)=(1+q^2)B_{2n-2}(q)\mid E_{2n-1}(q)$. 
Denote by $V_{f}(g)$ the
exponent of the polynomial factor $f(q)$ in $g(q)$, that is,
$$f(q)^{V_{f}(g)}\mid g(q) \text{ and } f(q)^{V_{f}(g)+1}\nmid g(q).
$$
Since $1+q^2$ and $1+q^i$
share no common roots where $i$ is odd or $4\mid i$, there holds 
$$
V_{1+q^2}(Ev_{i}(q))=
\begin{cases}
0,\quad \text{ $i$ odd},\\
1, \quad \text{  $i$ even}.
\end{cases}
$$
 Thus, 
$$
V_{1+q^2}(B_{2n-2}(q))=\sum_{i\leq n-1\atop{i \text{ even}}}1=\frac{n-1}{2},
$$
and we only need to prove that
$$
V_{1+q^2}(E_{2n-1}(q))\geq V_{1+q^2}\left((1+q^2)B_{2n-2}(q)\right) =\frac{n+1}{2}.
$$
We proceed to prove this by induction on $n$.

Similar to $V_{1+q^2}(B_{2n-2}(q))=\lfloor(n-1)/2\rfloor$, one can verify that $V_{1+q^2}((-q;q)_t)=\lfloor(t+2)/4\rfloor$ for $t\geq0$.
Thus, for $0\leq k \leq n-1$, we have
$$
V_{1+q^2}\left({2n\brack 2k+1}_q\right)
=\left\lfloor\frac{2n}{4}\right\rfloor-\left\lfloor\frac{2k+1}{4}\right\rfloor-\left\lfloor\frac{2n-2k-1}{4}\right\rfloor=
\begin{cases}
1, \quad& k \text{ odd}, \\
0, & k \text{ even}.
\end{cases}
$$
By the induction hypothesis, for $0\leq k \leq n-2$,
$$
V_{1+q^2}\left(E_{2k+1}(q)\right)\geq
\begin{cases}
(k-1)/2, & k \text{ odd}, \\
(k+2)/2, & k \text{ even}.
\end{cases}
$$
Then
\begin{align*}
&\quad \,\,  V_{1+q^2}\biggl({2n\brack 2k+1}_q(-q;q)_{2n-2k-2}E_{2k+1}(q)\biggr)\\
&= V_{1+q^2}\left({2n\brack 2k+1}_q\right)
+ \left\lfloor\frac{2n-2k-2+2}{4}\right\rfloor+V_{1+q^2}\left(E_{2k+1}(q)\right)\\
& \geq
\begin{cases}
1+(n-k)/2+(k-1)/2=(n+1)/2, & k \text{ odd}, \\
(n-k-1)/2+(k+2)/2=(n+1)/2, & k \text{ even}.
\end{cases}
\end{align*}
Thus, for $1\leq k \leq n-2$
$$
(1+q^2)^{(n+1)/2} ~\bigg|~ {2n\brack 2k+1}_q(-q;q)_{2n-2k-2}E_{2k+1}(q).
$$
Rewrite~\eqref{eq:qGeno} as 
 \begin{equation}\label{eq:qGeno2}
\sum_{k=1}^{n-1}{2n\brack 2k+1}_q(-q;q)_{2n-2k-2}(-1)^{k}E_{2k+1}(q)=\Delta(q),
\end{equation}
where
$\Delta(q):=(-q;q)_{2n-1}-{2n\brack 1}_q(-q;q)_{2n-2}$.
Notice that
\begin{align*}
 V_{1+q^2}(\Delta(q))&= V_{1+q^2}\left((-q;q)_{2n-2}\right)+V_{1+q^2}\left(1+q+\cdots+q^{2n-1}-(1+q^{2n-1})\right)\\
&=\left\lfloor\frac{2n-2+2}{4}\right\rfloor+V_{1+q^2}\left(q(1+q)(1+q^2)\sum_{i=0}^{(n-3)/2}q^{4i}\right)\\
&=\frac{n-1}{2}+1=\frac{n+1}{2},
\end{align*}
we have that each term in~\eqref{eq:qGeno} is divisible by $(1+q^2)^{(n+1)/2}$
except
$$
{2n\brack 2n-1}_q (-q;q)_{0}(-1)^{n-1} E_{2n-1}(q).
$$
Since $V_{1+q^2}\left({2n\brack 2n-1}_q\right)=0$, we have
$$
V_{1+q^2}\left(E_{2n-1}(q)\right)=V_{1+q^2}\biggl({2n\brack 2n-1}_q E_{2n-1}(q)\biggr)\geq\frac{n+1}{2},
$$
as desired. 
This completes the proof of the theorem.
\end{proof}
\section{Proof of Corollary~\ref{cor:divisi}}
\label{sec:2}
This section aims to prove Corollary~\ref{cor:divisi}  by using identity~\eqref{eq:Genocchi}.
\begin{proof}[{\bf Proof of Corollary~\ref{cor:divisi}}]
 Rewrite~\eqref{eq:Genocchi} as
\begin{equation}\label{eq:Genocchi2}
nE_{2n-1}=2^{2n-2}-\sum_{k=0}^{n-2}{2n\choose 2k+1}2^{2n-2k-3}(-1)^{k}E_{2k+1}.
\end{equation}
We proceed  to prove the result
by induction on $n$ basing on~\eqref{eq:Genocchi2}.
Observe that
\begin{equation}\label{eq:guo}
{2n\choose 2k+1}=\frac{2k+2}{2n+1}{2n+1\choose 2k+2}.
\end{equation}
Since $2n+1$ is odd and ${2n+1\choose 2k+2}$ is a natural number, the power of $2$ in $(2k+2)$ is not greater than that in ${2n\choose 2k+1}$. Thus, for $0\leq k\leq n-2$, as $2(k+1)E_{2k+1}$ is divisible by $2^{2k+1}$ by the induction hypothesis, ${2n\choose 2k+1}2^{2n-2k-3}(-1)^{k}E_{2k+1}$ is divisible by $2^{2n-2}$. It then follows from the recursion~\eqref{eq:Genocchi2}  that $nE_{2n-1}$ is divisible by $2^{2n-2}$. This proves that $(n + 1)E_{2n+1}$ is divisible by $2^{2n}$ by induction.

It remains to show that $\frac{(n + 1)E_{2n+1}}{2^{2n}}=G_{2n+2}$ is odd. Dividing both sides of~\eqref{eq:Genocchi2} by $2^{2n-2}$ and then using~\eqref{eq:guo} we get
\begin{equation}\label{eq:Genocchi3}
G_{2n}=1-\sum_{k=0}^{n-2}\frac{1}{2n+1}{2n+1\choose 2k+2}(-1)^{k}G_{2k+2}.
\end{equation}
 We again proceed
by induction on $n$. By the induction hypothesis, we can assume that $G_{2k+2}$ are odd for all $0\leq k\leq n-2$. Thus, the integer $\frac{1}{2n+1}{2n+1\choose 2k+2}(-1)^{k}G_{2k+2}$ has the same parity as ${2n+1\choose 2k+2}$ for $0\leq k\leq n-2$. It follows from~\eqref{eq:Genocchi3} that the parity of $G_{2n}$ is the same as that of
$$
1+\sum_{k=0}^{n-2}{2n+1\choose 2k+2}=1+\sum_{k=2}^{n}{2n+1\choose k}=2^{2n}-(2n+1),
$$
which is obviously odd. This completes the proof of Corollary~\ref{cor:divisi} by induction.
\end{proof}

\section{Proof of Conjecture~\ref{conj:AL}}
\label{sec:3}
Recall the three hyperbolic functions
$$
\sinh(x)=\frac{e^x-e^{-x}}{2}, \quad\cosh(x)=\frac{e^x+e^{-x}}{2}\quad\text{and}\quad\tanh(x)=\frac{\sinh(x)}{\cosh(x)}.
$$
We need the following lemma in our generating function proof of Conjecture~\ref{conj:AL}. 
\begin{lemma}\label{lem:1}
Let $a_{n,k}=|\{\phi\vDash[n]: \text{$\phi$ odd}, \ell(\phi)=k\}|$.
Then
$$
\sum_{n,k} a_{n,k}y^k\frac{x^n}{n!}=\sum_{k\geq1}(y\sinh(x))^k=\frac{y\sinh(x)}{1-y\sinh(x)}.
$$
\end{lemma}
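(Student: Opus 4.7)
The plan is to recognize this as a completely standard application of the exponential formula for labeled structures, with ``a block'' playing the role of the atomic structure. I would organize the proof in three short steps, so that the identity falls out essentially by inspection.

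First I would set up the exponential generating function for a single odd block. A block of odd size $m$ on a fixed ground set of size $m$ is unique (the block is the entire set), so the EGF counting a single odd nonempty block is
$$
B(x) := \sum_{m\geq 1,\ m\text{ odd}}\frac{x^m}{m!} = x+\frac{x^3}{3!}+\frac{x^5}{5!}+\cdots = \sinh(x).
$$

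Next I would apply the product formula for exponential generating functions. An odd set composition $\phi = \phi_1/\phi_2/\cdots/\phi_k \vDash [n]$ with exactly $k$ blocks is an ordered $k$-tuple of disjoint nonempty odd blocks whose union is $[n]$. Since the blocks sit on disjoint labeled ground sets that partition $[n]$, and the $k$-tuple is ordered, the EGF enumerating such compositions is the $k$-th power $B(x)^k = (\sinh(x))^k$. In other words
$$
\sum_{n\geq 0} a_{n,k}\,\frac{x^n}{n!} = (\sinh(x))^k \qquad (k\geq 1),
$$
with $a_{n,0}=0$ for $n\geq 1$.

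Finally, I would multiply by $y^k$ and sum over $k\geq 1$, then use the geometric series to obtain
$$
\sum_{n,k} a_{n,k}\,y^k\,\frac{x^n}{n!} = \sum_{k\geq 1}(y\sinh(x))^k = \frac{y\sinh(x)}{1-y\sinh(x)},
$$
as claimed. There is no real obstacle here; the only point worth articulating clearly is the justification of the product rule, namely that the EGF product exactly encodes the passage from a single odd block to an ordered sequence of such blocks distributed over $[n]$. The geometric series manipulation is formally valid in $\mathbb{Q}[y][[x]]$ since $\sinh(x)$ has no constant term.
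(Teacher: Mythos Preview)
Your proof is correct and is exactly the approach the paper takes; the paper simply cites the compositional formula for exponential generating functions (\cite[Proposition~5.1.3]{St}) in a single sentence, whereas you have unpacked that formula into the three steps $B(x)=\sinh(x)$, $\sum_n a_{n,k}x^n/n!=(\sinh x)^k$, and the geometric-series summation over $k$.
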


\begin{proof}
This is an easy application of the compositional formula for the exponential generating functions (see~\cite[Proposition~5.1.3]{St}).
\end{proof}
We can now prove Conjecture~\ref{conj:AL}.
\begin{proof}[{\bf Proof of Conjecture~\ref{conj:AL}}]
By Lemma~\ref{lem:1}, the exponential generating function for the left-hand side of~\eqref{eq:ALconj} is
\begin{align*}
\sum_{\text{$n$ odd}}\frac{x^n}{n!}\sum_{\phi\vDash[n]\atop\text{$\phi$ odd}}2^{n-\ell(\phi)}\mu_{\ell(\phi)+1}&=\sum_{\text{$n$ odd}}\frac{x^n}{n!}\sum_{\text{$k$ odd}}2^{n-k}a_{n,k}\mu_{k+1}\\
&=\sum_{\text{$n$ odd}}\frac{x^n}{n!}\sum_{\text{$k$ odd}}2^{n-k}n![x^n](\sinh(x))^k\mu_{k+1}\\
&=\sum_{\text{$n$ odd}} (2x)^n\sum_{k\geq0}[x^n]\biggl(\frac{\sinh(x)}{2}\biggr)^{2k+1}(-1)^kC_k\\
&=\sum_{k\geq0}\biggl(\frac{\sinh(2x)}{2}\biggr)^{2k+1}(-1)^kC_k\\
&=\frac{\sinh(2x)}{2}\sum_{k\geq0}\biggl(\frac{-\sinh^2(2x)}{4}\biggr)^kC_k\\
&=\frac{\cosh(2x)-1}{\sinh(2x)}\\
&=\tanh(x),
\end{align*}
where the second last equality follows from the ordinary generating function for Catalan numbers
$$
\sum_{k\geq0}C_kx^k=\frac{1-\sqrt{1-4x}}{2x}.
$$
Thus, we have
\begin{equation}\label{eq:tanh}
\sum_{\text{$n$ odd}}\frac{x^n}{n!}\sum_{\phi\vDash[n]\atop\text{$\phi$ odd}}2^{n-\ell(\phi)}\mu_{\ell(\phi)+1}=\tanh(x).
\end{equation}
On the other hand, we have
$$
\sum_{\text{$n$ odd}}(-1)^{\frac{n-1}{2}}E_n\frac{x^n}{n!}=\frac{\tan(ix)}{i}=\frac{\sinh(x)}{\cosh(x)}=\tanh(x).
$$
Comparing with~\eqref{eq:tanh} proves Conjecture~\ref{conj:AL}.
\end{proof}

\section{Concluding  remarks}
\label{sec:4}

In this work, we prove two combinatorial identities,~\eqref{eq:ALconj} and~\eqref{eq:Genocchi},  involving tangent numbers found in the work of Aliniaeifard and  Li~\cite{AL} with arithmetic applications. One may wonder whether there exist any similar  combinatorial identities for secant numbers.
For an analog of~\eqref{eq:ALconj} for secant numbers, consider even set compositions of $[2n]$, where a set composition  is called {\em even} if  each of its blocks has even size. Sagan~\cite{Sa} proved combinatorially using a simple sign-reversing involution that
\begin{equation}
\sum_{\phi\vDash[n]\atop\text{$\phi$ even}}(-1)^{\ell(\phi)}=(-1)^{\frac{n}{2}}E_n
\end{equation}
for $n$ being even. However, we have thus far been unable to find an involution proof of~\eqref{eq:ALconj}. Our combinatorial proof of~\eqref{eq:Genocchi} may shed some light on finding such a proof.

Regarding~\eqref{eq:Genocchi},  we have the following similar identity for  the secant numbers
\begin{equation} \label{eq:Secant}
\sum_{k=0}^{n}{2n+1\choose 2k}2^{2n-2k}(-1)^{k}E_{2k}=1.
\end{equation}
This identity can be proved combinatorially using similar ideas in our combinatorial proof of~\eqref{eq:Genocchi}. For $0\leq k\leq n$, let us consider the set $V_k$ of all permutations $\pi=\pi_1\pi_2\ldots\pi_{2n+1}$ of $[2n+1]$ such that
$$
\pi_1\pi_2\cdots\pi_{2n-2k+1}\text{ is unimodal and  } \pi_{2n-2k+2}\pi_{2n-2k+3}\cdots\pi_{2n+1} \text{ is alternating}.
$$
For $1\leq k\leq n$, let $W_k$ be the subset of $V_k$ consisting of these permutations satisfying
$$\pi_{2n-2k+1}<\pi_{2n-2k+2}\text{ and the prefix } \pi_1\pi_2\cdots\pi_{2n-2k+1}\text{ is not increasing}.$$
Let $\id_{2n+1}$ be the identity permutation of $[2n+1]$.
Then, we have $V_0-\{\id_{2n+1}\}=V_1-W_1$ and  $W_{i}=V_{i+1}-W_{i+1}$ for $1\leq i\leq n-1$. Thus,
\begin{equation}\label{eq:comsec}
\{\id_{2n+1}\}=V_0-V_1+W_1=V_0-V_1+(V_2-W_2)=\cdots=V_0-V_1+V_2-V_3+\cdots,
\end{equation}
from which~\eqref{eq:Secant} follows. Taking the inversion numbers into account and using Lemma~\ref{lem:unimodal}, we get the following $q$-analog of~\eqref{eq:Secant}.

\begin{theorem}
\label{thm:qsec}
For $n\geq0$,  we have
\begin{equation}\label{eq:qsec}
\sum_{k=0}^{n}{2n+1\brack 2k}_q(-q;q)_{2n-2k}(-1)^{k}E_{2k}(q)=1,
\end{equation}
where
\begin{equation*}
E_{2k}(q):=\sum_{\pi\in\A_{2k}}q^{\inv(\pi)}.
\end{equation*}
\end{theorem}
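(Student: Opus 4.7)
The plan is to $q$-refine the sign-reversing chain of set identities~\eqref{eq:comsec} in exact parallel with how the proof of Theorem~\ref{thm:qgeno} $q$-refines~\eqref{eq:incluexclu}. Since~\eqref{eq:comsec} is a literal (signed) set identity
$\{\id_{2n+1}\}=V_0-V_1+V_2-V_3+\cdots\pm V_n$,
applying the inversion-weighted sum $\pi\mapsto q^{\inv(\pi)}$ to both sides and summing over $\pi$ produces~\eqref{eq:qsec} immediately, provided one has a clean closed form for $\sum_{\pi\in V_k}q^{\inv(\pi)}$. Because $\inv(\id_{2n+1})=0$, the left-hand side contributes $q^0=1$.

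The core computation is therefore to establish
\begin{equation*}
\sum_{\pi\in V_k}q^{\inv(\pi)}={2n+1\brack 2k}_q(-q;q)_{2n-2k}E_{2k}(q).
\end{equation*}
To see this, I would encode each $\pi\in V_k$ by (i) a $2k$-element subset $B\subseteq[2n+1]$ supporting the alternating suffix, (ii) a unimodal arrangement of $[2n+1]\setminus B$ as the prefix of length $2n-2k+1$, and (iii) an alternating arrangement of $B$ as the suffix. The statistic $\inv(\pi)$ splits cleanly into inversions within the prefix, inversions within the suffix, and cross-inversions between the front block $[2n+1]\setminus B$ and the back block $B$. Summing these three pieces independently, Lemma~\ref{lem:unimodal} gives $(-q;q)_{2n-2k}$ for the unimodal prefix, the definition of $E_{2k}(q)$ gives the alternating suffix factor, and the $q$-multinomial interpretation~\eqref{eq:qmul} gives ${2n+1\brack 2k}_q$ for the cross-inversion sum over $B$; multiplying closes the computation.

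The hard part is essentially already done in the excerpt, namely the verification of~\eqref{eq:comsec} as a genuine signed set identity rather than a mere cardinality equation; taking $q^{\inv}$ weights then costs nothing extra. The only items worth double-checking are the boundary cases: at $k=0$, $V_0=\U_{2n+1}$ and the formula correctly specializes to $(-q;q)_{2n}$ by Lemma~\ref{lem:unimodal}; at $k=n$, the prefix has length $1$, contributing the factor $(-q;q)_0=1$, matching the convention that a single element is unimodal with zero inversions. With these sanity checks in hand, the proof of~\eqref{eq:qsec} reduces to assembling the three $q$-factorizations above and summing over $k$ with alternating signs.
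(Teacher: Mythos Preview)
Your proposal is correct and matches the paper's own argument essentially line for line: the paper derives Theorem~\ref{thm:qsec} exactly by taking inversion weights in the signed set identity~\eqref{eq:comsec} and invoking Lemma~\ref{lem:unimodal}, together with the same three-factor decomposition (subset choice, unimodal prefix, alternating suffix) that you spell out. The only micro-detail worth noting is that the cross-inversion sum over $B$ literally gives ${2n+1\brack 2n+1-2k}_q$ via~\eqref{eq:qmul} (since the front block has size $2n+1-2k$), and one uses the symmetry ${2n+1\brack 2n+1-2k}_q={2n+1\brack 2k}_q$ to obtain the stated factor.
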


Finally, it would be interesting to find $q$-analogs of~\eqref{eq:ALconj}.

\section*{Acknowledgement} 
We are indebted to Victor J.W. Guo for his observation of~\eqref{eq:guo} that leads to our proof of Corollary~\ref{cor:divisi}.  We also thank Guo-Niu Han and Shu Xiao Li for their helpful discussions. This work was supported by the National  Science Foundation of China (grants 12322115, 12271301 \& 12201641)
 and the Fundamental Research Funds for the Central Universities.

\end{document}